\documentclass{article}
\usepackage{amsmath,amssymb,amsthm}
\makeatletter
\let\@fnsymbol\@arabic
\makeatother

\usepackage{authblk}
\title{Another article on the number of homomorphisms}
\author[ ]{Alexander V. Khudyakov}
\affil[ ]{\textit{Faculty of Mechanics and Mathematics of Moscow State University,
Moscow 119991, Leninskie gory, MSU.}}
\affil[ ]{\textit{Moscow Center for Fundamental and Applied Mathematics}}
\affil[ ]{\textit{forsovenok121212@yandex.ru}}

\newcommand\blfootnote[1]{
    \begingroup
    \renewcommand\thefootnote{}\footnote{#1}
    \addtocounter{footnote}{-1}
    \endgroup
}

\let\ge\geqslant
\let\epi\twoheadrightarrow

\DeclareMathOperator{\Hom}{Hom}
\DeclareMathOperator{\lcm}{lcm}

\newcommand{\Z}{\mathbb{Z}}

\hyphenation{sit-zungs-be-richte}

\theoremstyle{plain}
\newtheorem{theorem}{Theorem}[section]
\newtheorem{corollary}[theorem]{Corollary}
\newtheorem{lemma}[theorem]{Lemma}
\newtheorem{observation}[theorem]{Observation}

\newtheorem*{lemma*}{Lemma}
\newtheorem*{claim*}{Claim}
\newtheorem*{conjecture*}{Conjecture}

\newtheorem*{asai_yoshida_conjecture}{Asai-Yoshida conjecture \cite{Asai_Yoshida_1993}}
\newtheorem*{bkv_theorem}{BKV theorem \cite{Brusyanskaya_2019}}
\newtheorem*{brauer_lemma}{Brauer lemma \cite{Brauer_1969}}
\newtheorem*{frobenius_theorem}{Frobenius theorem \cite{Frobenius_1895}}
\newtheorem*{restricted_asai_yoshida_conjecture}{Restricted Asai-Yoshida conjecture}
\newtheorem*{solomon_theorem}{Solomon theorem \cite{Solomon_1969}}
\newtheorem*{yoshida_theorem}{Yoshida theorem \cite{Yoshida_1993}}

\theoremstyle{definition}
\newtheorem{definition}[theorem]{Definition}
\newtheorem{example}[theorem]{Example}
\newtheorem{remark}[theorem]{Remark}

\begin{document}

\maketitle
\blfootnote{This work was supported by the Russian Science Foundation, project no. 22-11-00075.}

\begin{abstract}
    We extend the class of abelian groups for which a conjecture of Asai and Yoshida on the number of crossed homomorphisms holds. We also prove a general result which connects certain problems concerning divisibility in groups to the Asai-Yoshida conjecture. One of the consequences is that for finite groups \(F\) and \(G\) the number \(|\Hom (F, G)|\) is divisible by \(\gcd(|G|, |F : F'|)\) if \(F/F'\) is a product of a cyclic group and a group with cube-free exponent.
\end{abstract}

\section{Introduction}

The following two theorems, due to Frobenius and Solomon, may serve as a good illustration of the scope of problems we study.

\begin{frobenius_theorem}
    The number of solutions to the equation \(x^n = 1\) in a finite group \(G\) is divisible by \(\gcd(|G|, n)\).
\end{frobenius_theorem}

\begin{solomon_theorem}
    In any group, the number of solutions to a system of coefficient-free equations is divisible by the order of the group provided the number of equations is less than the number of unknowns.
\end{solomon_theorem}

There is a generalization of the Frobenius theorem due to Yoshida.

\begin{yoshida_theorem}
    The number of homomorphisms from a finite abelian group \(M\) to a finite group \(G\) is divisible by \(\gcd(|G|, |M|)\).
\end{yoshida_theorem}

Asai and Yoshida conjectured that a more general statement hold.

\begin{conjecture*}[\cite{Asai_Yoshida_1993}]
    The number of homomorphisms from a (not necessarily abelian) finite group \(F\) to a finite group \(G\) is divisible by \(\gcd(|G|, |F : F'|)\).
\end{conjecture*}

This conjecture is known to hold (see \cite{ASAI_TAKEGAHARA_1999}) when each \(p\)-torsion of a group \(F/F'\) is of the form \(\Z/p^n\Z \times (\Z/p\Z)^m\). Our Theorem~\ref{thm:annotated} (which can be concidered as the main result of the present paper) states that it also holds when those \(p\)-torsions are of the form \(\Z/p^n\Z \times (\Z/p\Z)^m \times (\Z/p^2\Z)^k\).

Asai and Yoshida used crossed homomorphisms to study that conjecture. They showed that it is true if, for each \(p\)-torsion \(M\) of \(F/F'\) and each \(p\)-subgroup \(H\) of \(G\), the following conjecture on the number of crossed homomorphisms holds (we recall the definition of a crossed homomorphism in the section ``Crossed homomorphisms").

\begin{asai_yoshida_conjecture}
    The number of crossed homomorphisms from a finite abelian group \(M\) to a finite group \(H\) on which \(M\) acts is divisible by \(\gcd(|H|, |M|)\).
\end{asai_yoshida_conjecture}

This conjecture has been verified in various cases (\cite{ASAI_TAKEGAHARA_1999} for abelian \(H\), \cite{Asai_Niwasaki_Takegahara_2003} when \(H\) is a semidihedral, generalized quaternion or dihedral \(2\)-group and \(M\) is of rank \(2\), \cite{Asai_2023} for \(M = \Z/p^n\Z \times (\Z/p\Z)^m \times  \Z/p^2\Z\); the first result of this kind was obtained by P.~Hall in \cite{Hall_1936} for cyclic \(M\)). We prove the conjecture for \(M = \Z/p^n\Z \times (\Z/p\Z)^m \times  (\Z/p^2\Z)^k\), and also when \(H\) is a \(p\)-group all of whose non-abelian subgroups have elementary abelian center. Such groups include, for instance, semidihedral, generalized quaternion and dihedral \(2\)-groups.

On the other side, a general result which implies both the Frobenius and Solomon theorems (but not the Yoshida theorem) was proven in \cite{Klyachko_2017} (for \(n = 0\)) and \cite{Brusyanskaya_2019} (for other values of \(n\)). A group \(F\) equipped with an epimorphism \(F \to \Z/n\Z\) (where \(n \in \Z\)) is called an \textit{\(n\)-indexed} group. This epimorphism \(F \to \Z/n\Z\) is called \textit{degree} and denoted \(\deg\).

\begin{bkv_theorem}

Suppose that an integer \(n\) is a multiple of the order of a subgroup \(H\) of a group \(G\) and a set \(\Phi\) of homomorphisms from an \(n\)-indexed group \(F\) to \(G\) satisfies the following conditions.
\begin{enumerate}

    \item \(\Phi\) is invariant with respect to conjugation by elements of \(H\).

    \item For any \(\phi \in \Phi\), each homomorphism \(\psi\) such that
    \[
    \left\{\begin{aligned}
        &\psi(f) = \phi(f) &\text{ for each element \(f \in F\) of degree zero,} \\
        &\psi(f) \in \phi(f)H &\text{ for each element \(f \in F\).}
    \end{aligned}\right.
    \]
    belongs to \(\Phi\) too.
    
\end{enumerate}
Then \(|\Phi|\) is divisible by \(|H|\).

\end{bkv_theorem}

We refer to \cite{Klyachko_2017}, \cite{Brusyanskaya_2019}, \cite{Klyachko_2020}, \cite{Brusyanskaya_2022}, \cite{Brusyanskaya_2024} for the various consequences of this theorem. It is natural to ask whether it is possible to replace a group indexed by \(\Z/n\Z\) in this theorem with a group indexed by an arbitrary finitely generated abelian group \(M\) of order \(n\) (in particular, a positive answer would imply that \(|\Hom(F, G)|\) is divisible by \(\gcd(|F : F'|, |G|)\) for finite groups \(F\) and \(G\)). Our Theorem~\ref{thm:hom_family} shows that such a generalization is possible precisely when the Asai-Yoshida conjecture holds for the group \(M\).

We recall the definition of a crossed homomorphism and prove our generalization of BKV theorem in the section ``Crossed homomorphisms" (this proof is not a novelty, since we basically repeat the reasoning of \cite{Klyachko_2017} and add Lemma~\ref{lem:tail}, which seems to be a known result, though we could not find a clear reference). We prove the special case of the Asai-Yoshida conjecture in the section ``Main theorem" (our proof follows the ideas of \cite{Yoshida_1993}). We show how the result from the abstract (for finite groups \(F\) and \(G\) the number \(|\Hom (F, G)|\) is divisible by \(\gcd(|G|, |F : F'|)\) if \(F/F'\) is a product of a cyclic group and a group with cube-free exponent) follows from our theorems in the section ``The number of homomorphisms''.

\textbf{Notation and Conventions.} Throughout the paper, all actions are assumed to be right actions, and the image of a point \(x\) under the action of a group element \(g\) is denoted \(x^g\). For group elements, as usual, 
\(g^h = h^{-1}gh\). If a group \(M\) acts on a group \(G\), the semidirect product of these groups is denoted \(M \ltimes G\).

A cyclic group in additive notation is denoted \(\Z\) (in the case of an infinite group) or \(\Z/n\Z\) (in the case of a group of order \(n\)), and in multiplicative notation it is denoted \(\langle a \rangle_{\infty}\) (in the case of an infinite cyclic group generated by an element \(a\)) or \(\langle a \rangle_n\) (in the case of a cyclic group of order \(n\) generated by an element \(a\)).

\(Z(G)\), \(G'\), \(|G|\)  denote the center, the commutator subgroup and the order of a group \(G\) respectively, and \(|X|\) denotes the number of elements of a set \(X\). \(\Hom(F, G)\) denotes the set of all group homomorphisms \(F \to G\), and \(|G : H|\) denotes the index of a subgroup \(H\) in a group \(G\). The least common multiplier and the greatest common divisor are denoted \(\lcm\) and \(\gcd\) respectively.

Finiteness of groups is not assumed anywhere by default; divisibility is always understood in the sense of cardinal arithmetic (an infinite cardinal is divisible by all non-zero cardinals not exceeding it, and zero is divisible by all cardinals). For a group \(G\) and an integer \(n\) their \(\gcd\) is defined by \(\gcd(G, n) = \lcm(\{|H| : \text{\(H\) is a subgroup of \(G\) and \(|H|\) divides \(n\)}\})\) (this makes sense when \(G\) is infinite and when \(G\) is finite this coincides with \(\gcd(|G|, n)\) due to Sylow theorems). 

The author is grateful to his scientific advisor Anton A.~Klyachko. The author thanks the Theoretical Physics and Mathematics Advancement Foundation ``BASIS".

\section{Crossed homomorphisms}

\begin{definition}\label{def:crossed}
    Let \(M\) be a group acting on a group \(H\). A map \(\alpha \colon M \to H\) is called a \textit{crossed homomorphism}, if a map \(M \to M \ltimes H\) given by \(a \mapsto (a, \alpha(a))\) is a homomorphism. Equivalently, \(\alpha\) is a crossed homomorphism if \(\alpha(ab) = \alpha(a)^b \alpha(b)\) for all \(a, b \in M\).
\end{definition}

\begin{example}
    If the action of \(M\) is trivial, then the crossed homomorphisms are just the usual homomorphisms.    
\end{example}

\begin{example}\label{exmpl:infinite_cyclic}
    Let \(M\) be an infinite cyclic group \(\langle a \rangle_\infty\). Then, for each element \(h \in H\), there is an unique homomorphism defined by \(a \mapsto (a, h)\), hence an unique crossed homomorphism defined by \(a \mapsto h\), and crossed homomorphisms \(M \to H\) are in one-to-one correspondence with elements of \(H\).
\end{example}

\begin{example}\label{exmpl:finite_cyclic}
    Let \(M\) be a finite cyclic group \(\langle a \rangle_n\) which order is a multiple of \(|H|\). Then a crossed homomorphism is uniquely determined by an element \(h \in H\) such that \((a, h)^n = 1\) in \(M \ltimes H\). It is not trivial, but it follows from the observation of Brauer (which we state in the next paragraph) that under given conditions \((a, h)^n = 1\) for all \(h \in H\). It follows that, just as in the infinite cyclic case, the crossed homomorphisms \(M \to H\) are in one-to-one correspondence with elements of \(H\).
\end{example}

\begin{brauer_lemma}
    If \(H\) is a finite normal subgroup of a group \(G\), then, for every \(a \in G\) and \(h \in H\), the elements \(a^{|H|}\) and \((ah)^{|H|}\) are conjugate by an element of \(H\).
\end{brauer_lemma}

One can find Brauer's elegant proof of this lemma in \cite{Brauer_1969} or \cite{Brusyanskaya_2019}. We will not use this lemma any further.

\begin{example}\label{exmpl:non_abelian}
    Suppose that \(M\) is a non-abelian finite group and \(H\) is a cyclic group of order \(|M|\). Then the number of (non-crossed) homomorphisms \(M \to H\) is equal to \(|M : M'|\), which is strictly lesser than \(|H|\).
\end{example}

From our point of view, the purpose of introducing crossed homomorphisms is the following Lemma~\ref{lem:tail} (the definitions are due to \cite{Klyachko_2017}).

Suppose that \(\phi \colon F \to G\) is a group homomorphism. Fix a subgroup \(H\) of the group \(G\) and an epimorphism \(\deg\) from the group \(F\) onto some group \(M\). 

We define the \textit{tail} of \(\phi\) (with respect to a quadruple \((F \epi M; H \subset G)\)) to be a pair \((\phi_0, \phi_H)\), where \(\phi_0\) is the restriction of the homomorphism \(\phi\) to the subgroup \(\ker \deg \subset F\), and \(\phi_H\) is the mapping from \(F\) into the set of left cosets of the group \(G\) with respect to the subgroup \(H\), which sends an element \(f \in F\) to the coset \(\phi(f) H\).

The subgroup of \(H\) given by
\[ H_\phi = \bigcap_{f \in F} H^{\phi(f)} \cap C(\phi(\ker \deg))\]
is called the \textit{\(\phi\)-core} of H. In other words, the \(\phi\)-core \(H_\phi\) of \(H\) consists of elements \(h\) such that \(h^{\phi(f)} \in H\) for all \(f \in F\), and \(h^{\phi(f)} = h\) if \(f \in \ker \deg\). Note that the group \(M\) acts on \(H_\phi\) by the rule \(h^a = h^{\phi(f)}\) for \(a \in M\), where \(f \in F\) is an arbitrary element such that \(\deg f = a\).

\begin{lemma}\label{lem:tail}
    The homomorphisms having the same tail as \(\phi\) are precisely the homomorphisms of the form \(f \mapsto \phi(f) \alpha(\deg f)\), where \(\alpha\) is a crossed homomorphism \(M \to H_\phi\). In particular, the number of homomorphisms having the same tail as \(\phi\) coincides with the number of crossed homomorphisms \(M \to H_\phi\).
\end{lemma}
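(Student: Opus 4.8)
The plan is to set up a bijection between the two sets described in the lemma: homomorphisms $\psi \colon F \to G$ having the same tail as $\phi$, and crossed homomorphisms $\alpha \colon M \to H_\phi$. First I would unwind what ``same tail as $\phi$'' means. Two homomorphisms $\phi, \psi$ have the same tail exactly when $\phi_0 = \psi_0$ (they agree on $\ker\deg$) and $\phi_H = \psi_H$ (they agree modulo $H$ on all of $F$). The second condition says $\psi(f) \in \phi(f) H$ for every $f$, so I can write $\psi(f) = \phi(f)\,\beta(f)$ for a uniquely determined map $\beta \colon F \to H$. The whole proof then amounts to characterizing which such $\beta$ arise from a crossed homomorphism on $M$.

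\medskip

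The key steps are as follows. First I would show that $\beta$ depends only on $\deg f$, i.e. $\beta$ factors through $\deg$ as $\beta(f) = \alpha(\deg f)$. For this I would use the condition $\psi_0 = \phi_0$: if $f \in \ker\deg$ then $\psi(f) = \phi(f)$, so $\beta(f) = 1$; and I would combine this with the homomorphism property. Writing out $\psi(fg) = \psi(f)\psi(g)$ and substituting $\psi(f) = \phi(f)\beta(f)$ and using that $\phi$ is a homomorphism yields the cocycle-type identity
\[
\beta(fg) = \beta(f)^{\phi(g)}\,\beta(g).
\]
From this identity together with $\beta|_{\ker\deg} = 1$ one checks that $\beta$ is constant on each coset of $\ker\deg$ (if $\deg f = \deg g$, compare $\beta(f)$ and $\beta(g)$ using an element of $\ker\deg$), so $\beta = \alpha\circ\deg$ for a well-defined $\alpha \colon M \to H$. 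Next I would verify that $\alpha$ actually lands in $H_\phi$, not merely in $H$: the condition $\beta(f)^{\phi(g)} = \beta(fg)\beta(g)^{-1} \in H$ for all $g$ forces $\beta(f) \in \bigcap_g H^{\phi(g)}$, and the relation $\beta(f) = 1$ on $\ker\deg$ together with the cocycle identity forces $\beta(f)$ to commute with $\phi(\ker\deg)$, placing $\beta(f)$ in $C(\phi(\ker\deg))$; hence $\alpha(a) \in H_\phi$. Finally, translating the cocycle identity $\beta(fg) = \beta(f)^{\phi(g)}\beta(g)$ through $\beta = \alpha\circ\deg$ and the definition $h^a = h^{\phi(g)}$ of the $M$-action on $H_\phi$ gives exactly $\alpha(ab) = \alpha(a)^b\alpha(b)$, the crossed-homomorphism condition of Definition~\ref{def:crossed}.

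\medskip

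For the converse I would run the argument backwards: given any crossed homomorphism $\alpha \colon M \to H_\phi$, define $\psi(f) = \phi(f)\,\alpha(\deg f)$ and verify that $\psi$ is a genuine homomorphism (using that $\alpha(a)$ lies in $H_\phi$, so the action $h^a = h^{\phi(f)}$ is consistent and $\alpha(a)$ commutes appropriately), that $\psi$ agrees with $\phi$ on $\ker\deg$ since $\alpha(0) = 1$, and that $\psi(f) \in \phi(f)H$ by construction, so $\psi$ has the same tail as $\phi$. The two constructions are mutually inverse, giving the claimed bijection and hence the count in the final sentence.

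\medskip

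I expect the main obstacle to be the bookkeeping in verifying that $\alpha$ really lands in the $\phi$-core $H_\phi$ rather than just in $H$, and dually that the $M$-action used in the crossed-homomorphism identity is well-defined on $H_\phi$ independently of the chosen preimage $f$ of $a \in M$ under $\deg$. The well-definedness of the action is precisely where the two defining conditions of $H_\phi$ (invariance under all $H^{\phi(f)}$ and centralizing $\phi(\ker\deg)$) are needed: if $\deg f = \deg f'$ then $f' = fk$ with $k \in \ker\deg$, and $h^{\phi(f')} = h^{\phi(f)\phi(k)}$ equals $h^{\phi(f)}$ exactly because the relevant element lies in $C(\phi(\ker\deg))$. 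Keeping the noncommutativity straight in the cocycle manipulations — in particular the order of factors and the conjugation convention $g^h = h^{-1}gh$ fixed in the paper — is the part that demands care, whereas the algebraic skeleton of the argument is routine.
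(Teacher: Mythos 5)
Your proposal is correct and follows essentially the same route as the paper's proof: both directions are handled by introducing $\beta(f)=\phi(f)^{-1}\psi(f)$, deriving the cocycle identity $\beta(fg)=\beta(f)^{\phi(g)}\beta(g)$, using $\beta|_{\ker\deg}=1$ to factor $\beta$ through $\deg$, and checking via the same identity that the values land in $H_\phi$. The only cosmetic difference is that you place each individual $\beta(f)$ in $H_\phi$ directly, while the paper phrases this in terms of the subgroup generated by all the $\beta(f)$; the content is identical.
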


\begin{proof}

For a mapping \(\alpha \colon M \to H_\phi\), the rule \(f \mapsto \phi(f)\alpha(\deg f)\) defines a homomorphism iff \(\alpha\) is a crossed homomorphism. Indeed, it defines a homomorphism when
\[ \phi(fg) \alpha(\deg fg) = \phi(f) \alpha(\deg f) \phi(g) \alpha(\deg g), \]
and that is equivalent to \(\alpha(\deg fg) = \alpha(\deg f)^{\phi(g)} \alpha(\deg g)\). Since \(\deg\) is an epimorphism, we can pass to elements of \(M\) and get \(\alpha(ab) = \alpha(a)^b \alpha(b)\) for \(a, b \in M\), which is the definition of a crossed homomorphism. 

It remains to prove that any homomorphism \(\psi \colon F \to G\) having the same tail as \(\phi\) (that is, \(\phi(f)^{-1}\psi(f)\) lies in \(H\) for \(f \in F\) and equals \(1\) if \(f \in \ker \deg\)) is given by that rule (i.e. \(\psi \colon f \mapsto \phi(f)\alpha(\deg f)\) for some \(\alpha \colon M \to H_\phi\)). Indeed, \(\phi(f)^{-1} \psi(f)\) only depends on \(\deg f\), since for \(f' \in \ker \deg\) one has
\[\phi(f'f)^{-1}\psi(f'f) = \phi(f)^{-1}\phi(f')^{-1}\psi(f')\psi(f) = \phi(f)^{-1}\psi(f).\]
A subgroup of \(H\) generated by a set \(\{\phi(f)^{-1} \psi(f) : f \in F\}\) is invariant under every conjugation \(h \mapsto h^{\phi(f')}, f' \in F\), and each of its elements is fixed by every conjugation \(h \mapsto h^{\phi(f')}, f' \in \ker \deg\), because
\[\left(\phi(f)^{-1} \psi(f)\right)^{\phi(f')} = \left(\phi(ff')^{-1}\psi(ff')\right)\left(\phi(f')^{-1}\psi(f')\right)^{-1}.\]
It follows that this subgroup lies in \(H_\phi\). Defining \(\alpha\) by \(\alpha(\deg f) = \phi(f)^{-1} \psi(f)\) completes the proof.

\end{proof}

The following Theorem~\ref{thm:hom_family} connects certain divisibility problems in groups with this statement, which is equivalent to the Asai-Yoshida conjecture.

\begin{restricted_asai_yoshida_conjecture}
    The number of crossed homomorphisms from a finitely generated abelian group \(M\) to a group \(H\) on which \(M\) acts is divisible by \(|H|\) provided that \(|H|\) divides the order of \(M\).
\end{restricted_asai_yoshida_conjecture}

Examples~\ref{exmpl:infinite_cyclic} and~\ref{exmpl:finite_cyclic} show that for cyclic \(M\) the conjecture holds, and example~\ref{exmpl:non_abelian} shows that we cannot get rid of the commutativity of \(M\).

A group \(F\) equipped with an epimorphism onto a finitely generated abelian group \(M\) we call an \textit{\(M\)-indexed} group. This epimorphism \(F \to M\) we call \textit{degree} and denote \(\deg\).

\begin{theorem}\label{thm:hom_family}

Suppose that \(M\) is a finitely generated abelian group, \(H\) is a subgroup of a group \(G\), and a set \(\Phi\) of homomorphisms from an \(M\)-indexed group \(F\) to \(G\) satisfies the following conditions.
\begin{enumerate}

    \item \(\Phi\) is invariant with respect to conjugation by elements of \(H\).

    \item For any \(\phi \in \Phi\), each homomorphism \(\psi\) having the same tail as \(\phi\) with respect to a quadruple \((F \epi M; H \subset G)\) belongs to \(\Phi\) too.
    
\end{enumerate}
Suppose also that the number of crossed homomorphisms \(M \to H^*\) is divisible by \(|H^*|\) for each subgroup \(H^*\) of \(H\) and every action of the group \(M\) on \(H^*\).

Then \(|\Phi|\) is divisible by \(|H|\).

\end{theorem}

When we take \(F =F_0, M = M_0, G = G_0, H = H_0\) in Theorem~\ref{thm:hom_family}, we say that we apply that theorem \textit{to the quadruple \((F_0 \epi M_0; H_0 \subset G_0)\)}.

\begin{proof}

We say that two homomorphisms \(\phi, \psi \in \Phi\) are \textit{similar} and write \(\phi \sim \psi\), if their tails are conjugate by an element of \(H\), that is
\begin{multline*}
\phi \sim \psi \Longleftrightarrow \text{there exists \(h \in H\) such that} \\
\begin{aligned}
&\phi(f) = h\psi(f)h^{-1} &&\text{for each \(f \in F\) of degree \(0\), and} \\
&\phi(f)H = h\psi(f)H &&\text{for each \(f \in F\).} 
\end{aligned}
\end{multline*} 

Similarity is clearly an equivalence relation on \(\Phi\). The theorem is an immediate consequence of the following claim.

\begin{claim*} 

The number of elements in each class of similar homomorphisms is a multiple of \(|H|\). More precisely, for each \(\phi \in \Phi\) 

\begin{enumerate}
    \item the number of distinct tails of homomorphisms in \(\Phi\) similar to \(\phi\) equals \(|H : H_\phi|\);
    \item for each homomorphism \(\psi\) which is similar to \(\phi\) the number of homomorphisms in \(\Phi\) having the same tail as \(\psi\) does not depend on \(\psi\) and is divisible by \(|H_\phi|\).
\end{enumerate}

\end{claim*}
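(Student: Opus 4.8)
The plan is to prove the two numbered assertions of the Claim and then combine them: the similarity class of \(\phi\) splits, according to the tail, into disjoint pieces, so its cardinality is the number of tails occurring in it times the common size of a piece. Assertions (1) and (2) give \(|H : H_\phi| \cdot (\text{a multiple of } |H_\phi|)\), which is divisible by \(|H:H_\phi|\cdot|H_\phi| = |H|\). Since \(\Phi\) is the disjoint union of its similarity classes, \(|\Phi|\) is then divisible by \(|H|\), which is the theorem.

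For assertion (1) I would let \(H\) act on tails by conjugation, sending the tail of a homomorphism \(\chi\) to the tail of \(f\mapsto h\chi(f)h^{-1}\); condition 1 keeps the conjugates inside \(\Phi\), and unwinding the definition of \(\sim\) shows that the tails of the homomorphisms of \(\Phi\) similar to \(\phi\) are exactly the \(H\)-orbit of the tail of \(\phi\), each realized by a conjugate of \(\phi\). The crux is to compute the stabilizer of this tail: an element \(h\in H\) fixes it precisely when \(h\) centralizes \(\phi(\ker\deg)\) (the degree-zero clause) and \(h^{\phi(f)}\in H\) for every \(f\in F\) (the coset clause), and these two conditions together are exactly the defining conditions of \(H_\phi\). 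Orbit-stabilizer then yields \(|H:H_\phi|\) distinct tails.

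For assertion (2) I would use that conjugation by any \(h\in H\) is a bijection of \(\Phi\) (condition 1) which carries the set of homomorphisms sharing a given tail bijectively onto the set sharing the conjugated tail. Given \(\psi\sim\phi\), the definition of \(\sim\) supplies an \(h\) for which the conjugate \(f\mapsto h\psi(f)h^{-1}\) has the very same tail as \(\phi\); hence the homomorphisms with the same tail as \(\psi\) are in bijection with those having the same tail as \(\phi\), so their number does not depend on \(\psi\). To evaluate and bound this number I invoke Lemma~\ref{lem:tail}, which identifies it with the number of crossed homomorphisms \(M\to H_\phi\); since \(H_\phi\) is a subgroup of \(H\) carrying an action of \(M\), the hypothesis of the theorem applies with \(H^*=H_\phi\) and shows the count is divisible by \(|H_\phi|\). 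Here I also use condition 2 to know that every homomorphism sharing a tail with a member of \(\Phi\) again lies in \(\Phi\), so that counting inside \(\Phi\) coincides with counting all homomorphisms with that tail.

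The only step that is more than bookkeeping is the stabilizer computation in assertion (1): one must line up the two clauses of the similarity relation — the centralizer clause arising from degree-zero elements and the coset clause arising from all of \(F\) — against the two intersected factors \(\bigcap_f H^{\phi(f)}\) and \(C(\phi(\ker\deg))\) defining \(H_\phi\), while keeping the convention \(g^h=h^{-1}gh\) straight so that letting \(f\) range over all of \(F\) makes the membership \(h^{\phi(f)}\in H\) insensitive to inverses. Once this identification of the stabilizer with \(H_\phi\) is in hand, everything else follows from orbit-stabilizer, the conjugation bijection, Lemma~\ref{lem:tail}, and the assumed divisibility of crossed-homomorphism counts, and multiplying the two assertions completes the proof.
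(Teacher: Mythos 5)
Your proposal is correct and follows essentially the same route as the paper: the conjugation action of \(H\) on tails with the orbit–stabilizer identification of the stabilizer as \(H_\phi\) for part (1), and the conjugation bijection plus Lemma~\ref{lem:tail} together with the assumed divisibility of crossed-homomorphism counts for part (2). Your explicit matching of the two clauses of the similarity relation against the two factors defining \(H_\phi\) is a detail the paper leaves to the reader, but it is the same argument.
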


To prove the first part of the claim, one can note that the group \(H\) acts by conjugation on the set of the tails of homomorphisms in \(\Phi\) (due to the first condition of the theorem). Tails of homomorphisms similar to \(\phi\) form the orbit of the tail of the homomorphism \(\phi\) under this action. The cardinality of the orbit is known to be equal to the index of the stabilizer. It remains to note that the subgroup \(H_\phi\) is the stabilizer of the tail of the homomorphism \(\phi\).

The second part of the claim follows from Lemma~\ref{lem:tail}. The set of homomorphisms having the same tail as \(\psi\) and the set of homomorphisms having the same tail as \(\phi\) are conjugate by an element of \(H\), hence have the same cardinality. Due to Lemma~\ref{lem:tail} this cardinality equals the number of crossed homomorphisms \(M \to H_\phi\). This number is divisible by \(|H_\phi|\) by the assumption of the theorem.
\end{proof}

\begin{observation}\label{obs:hom_family_relative}
    The proof uses the condition on the number of crossed homomorphisms \(M \to H^*\) only for \(H^* = H_\phi\). Therefore this condition can be replaced, for instance, to an assumption that the restricted Asai-Yoshida conjecture holds for \(M\), and that the order of each subgroup \(H_\phi\) divides \(|M|\).
\end{observation}

\section{Some corollaries}

We start by introducing a more general version of Theorem~\ref{thm:hom_family}. Recall that, for a group \(G\) and an integer \(n\), their greatest common divisor \(\gcd(G, n)\) is given by
\(\gcd(G, n) = \lcm(\{|H| : \text{\(H\) is a subgroup of \(G\) and \(|H|\) divides \(n\)}\})\).

\begin{theorem}\label{thm:hom_family_extended}

Suppose that \(M\) is a finitely generated abelian group, \(H\) is a subgroup of a group \(G\), and a set \(\Phi\) of homomorphisms from an \(M\)-indexed group \(F\) to \(G\) satisfies the following conditions.
\begin{enumerate}

    \item \(\Phi\) is invariant with respect to conjugation by elements of \(H\).

    \item For any \(\phi \in \Phi\), each homomorphism \(\psi\) having the same tail as \(\phi\) with respect to the quadruple \((F \epi M; H \subset G)\) belongs to \(\Phi\) too.
    
\end{enumerate}
Suppose also that the number of crossed homomorphisms \(M \to H^*\) is divisible by \(|H^*|\) for each subgroup \(H^*\) of \(H\) and every action of the group \(M\) on \(H^*\) provided that \(|H^*|\) divides the order of \(M\).

Then \(|\Phi|\) is divisible by \(\gcd(H, |M|)\).

\end{theorem}

\begin{proof}
    Apply Theorem~\ref{thm:hom_family} to all quadruples \((F \epi M; H^* \subset G)\) such that \(H^*\) is a subgroup \(H\) and \(|H^*|\) divides \(|M|\) and use the definition of \(\gcd\).
\end{proof}

When we take \(F = F_0, M = M_0, G = G_0, H = H_0\) in Theorem~\ref{thm:hom_family_extended}, we say that we apply that theorem \textit{to the quadruple \((F_0 \epi M_0; H_0 \subset G_0)\)}.

The following corollaries are known results, though we include proofs to demonstrate the strength of theorems~\ref{thm:hom_family} and~\ref{thm:hom_family_extended}.

\begin{corollary}[restricted Asai-Yoshida conjecture implies its full version]\label{corol:crossed}
    Suppose that \(M\) is a finitely generated abelian group acting on a group \(H\) such that, for each subgroup \(H^*\) of \(H\) which order divides the order of \(M\) and every action of the group \(M\) on \(H^*\), the number of crossed homomorphisms \(M \to H^*\) is divisible by \(|H^*|\).

    Then the number of crossed homomorphisms from the group \(M\) to \(H\) is divisible by \(\gcd(H, |M|)\).
\end{corollary}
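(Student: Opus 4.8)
The plan is to count the crossed homomorphisms \(M \to H\) as the cardinality of a suitable family \(\Phi\) of genuine homomorphisms and then invoke Theorem~\ref{thm:hom_family_extended}. Form the semidirect product \(G = M \ltimes H\) from the given action, and regard \(H\) as the normal subgroup \(\{1\} \times H \trianglelefteq G\). Take as the \(M\)-indexed group \(F = M\) itself, with \(\deg = \mathrm{id}_M \colon M \to M\), so that \(\ker \deg\) is trivial. Let \(\Phi\) be the set of those homomorphisms \(\phi \colon M \to G\) that are sections of the projection \(\pi \colon G \to M\), i.e.\ satisfy \(\phi(a) \in \{a\} \times H\) for every \(a \in M\). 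By the very definition of a crossed homomorphism (Definition~\ref{def:crossed}), the assignment \(\alpha \mapsto \phi_\alpha\), \(\phi_\alpha(a) = (a, \alpha(a))\), is a bijection between crossed homomorphisms \(M \to H\) and elements of \(\Phi\); hence \(|\Phi|\) is exactly the number we wish to bound.

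Next I would check the two structural hypotheses of Theorem~\ref{thm:hom_family_extended} for this \(\Phi\). For the first, note that \(H = \ker \pi\), so conjugating a section by an element of \(H\) leaves the \(M\)-coordinate untouched and again produces a section; thus \(\Phi\) is invariant under conjugation by \(H\). For the second, since \(\ker \deg\) is trivial, two homomorphisms share a tail precisely when they agree modulo \(H\) at every point; as \(\phi(a) H = \{a\} \times H\) for \(\phi \in \Phi\), any \(\psi\) with the same tail as some \(\phi \in \Phi\) still satisfies \(\psi(a) \in \{a\} \times H\), so \(\psi\) is a section and lies in \(\Phi\). Finally, the divisibility assumption required by the theorem — that the number of crossed homomorphisms \(M \to H^*\) be divisible by \(|H^*|\) for every subgroup \(H^* \le H\) with \(|H^*|\) dividing \(\ord M\) and for every action of \(M\) on \(H^*\) — is word for word the hypothesis of the corollary.

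With all hypotheses in place, applying Theorem~\ref{thm:hom_family_extended} to the quadruple \((M \epi M;\, H \subset M \ltimes H)\) yields that \(|\Phi|\), and therefore the number of crossed homomorphisms \(M \to H\), is divisible by \(\gcd(H, \ord M)\), as claimed.

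The conceptual core of the argument, and the only step needing genuine care, is the semidirect-product reformulation: one must confirm that the sections of \(\pi\) are exactly the graphs of crossed homomorphisms, and that with trivial \(\ker \deg\) the tail condition of Theorem~\ref{thm:hom_family_extended} degenerates to ``\(\psi\) is again a section''. I expect no real obstacle beyond this; in particular there is no need to compute the action of \(M\) on the various \(\phi\)-cores, because the hypothesis is assumed for every action of \(M\) on every relevant subgroup of \(H\).
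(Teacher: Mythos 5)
Your proposal is correct and follows exactly the paper's own argument: identify crossed homomorphisms with sections of the projection \(M \ltimes H \to M\) and apply Theorem~\ref{thm:hom_family_extended} to the quadruple \((M \epi M;\, H \subset M \ltimes H)\). The paper's proof is a two-line version of yours; your verification of the two structural hypotheses (invariance under conjugation by \(H\), and the tail condition reducing to ``\(\psi\) is again a section'' because \(\ker\deg\) is trivial) is exactly the routine checking the paper leaves implicit.
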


\begin{proof}
    Note that crossed homomorphisms \(M \to H\) correspond to homomorphisms \(M \to M \ltimes H\) such that \(a \mapsto (a, h_a)\) for all \(a \in M\) and some \(h_a \in H\). It remains to apply Theorem~\ref{thm:hom_family_extended} to this family of homomorphisms and the quadruple \((M \epi M; H \subset M \ltimes H)\).
\end{proof}

\begin{corollary}[reduction to \(p\)-groups]
    Suppose that \(M\) is a finite abelian group acting on a group \(H\) such that, for each \(p\)-torsion \(M_p\) of the group \(M\), each subgroup \(H^*\) of \(H\) which order divides \(|M_p|\) and every action of the group \(M\) on \(H^*\), the number of crossed homomorphisms \(M_p \to H^*\) is divisible by \(|H^*|\).

    Then the number of crossed homomorphisms from the group \(M\) to \(H\) is divisible by \(\gcd(H, |M|)\).
\end{corollary}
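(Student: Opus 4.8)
The plan is to handle one prime at a time, turning the global divisibility statement about $M$ into the per-prime hypotheses on the $p$-primary components $M_p$ by choosing, for each $p$, a suitable indexing of $M$. The argument parallels the proof of corollary~\ref{corol:crossed}, the only new idea being that $M$ should be indexed by $M_p$ rather than by itself.

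First I would reduce to a single prime. Since $M$ is finite, its primary decomposition gives $\ord M = \prod_p \ord M_p$, and a short $p$-adic computation with Sylow theory shows that
\[ \gcd(H, \ord M) = \prod_p \gcd(H, \ord M_p), \]
the product being taken over the primes dividing $\ord M$; indeed, for each $p$ the $p$-part of either side equals $p^{\min(s_p,\, v_p(\ord M))}$, where $s_p$ is the supremum of the orders of the $p$-subgroups of $H$ and $v_p$ is the $p$-adic valuation (this also makes sense for infinite $H$). As the factors $\gcd(H, \ord M_p)$ are powers of distinct primes, hence pairwise coprime, a cardinal divisible by each of them is divisible by their product. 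It therefore suffices to prove that the number of crossed homomorphisms $M \to H$ is divisible by $\gcd(H, \ord M_p)$ for every prime $p$.

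Fix $p$ and write $M = M_p \times M_{p'}$, where $M_{p'}$ collects the components at the other primes. This makes $M$ an $M_p$-indexed group via the projection $\deg \colon M \epi M_p$, whose kernel is $M_{p'}$. As in corollary~\ref{corol:crossed}, I would identify the crossed homomorphisms $M \to H$ with the set $\Phi$ of homomorphisms $M \to M \ltimes H$ of the form $a \mapsto (a, h_a)$ with $h_a \in H$, and then apply theorem~\ref{thm:hom_family_extended} to the quadruple $(M \epi M_p; H \subset M \ltimes H)$. The two conditions on $\Phi$ are verified exactly as there: conjugating a homomorphism $a \mapsto (a, h_a)$ by an element of $H$ again yields one of this form, and any homomorphism $\psi$ with the same tail as some $\phi \in \Phi$ satisfies $\psi(f) \in \phi(f) H = \{f\} \times H$, so $\psi \in \Phi$. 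Theorem~\ref{thm:hom_family_extended} would then give that $|\Phi|$, which is the number of crossed homomorphisms $M \to H$, is divisible by $\gcd(H, \ord M_p)$; combining over all $p$ as above finishes the proof.

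The step demanding the most care is matching the divisibility hypothesis of theorem~\ref{thm:hom_family_extended} with the assumption of the corollary, since the two refer to actions of $M_p$ that need not coincide on the nose. For $\phi \in \Phi$ corresponding to a crossed homomorphism $\beta \colon M \to H$, the action of $M_p$ on the $\phi$-core $H_\phi \subset H$ induced by theorem~\ref{thm:hom_family_extended} is the given $M$-action twisted by $\beta$. Using the identity $\beta(g_1 g_2) = \beta(g_1)^{g_2}\beta(g_2)$ one checks that $h \mapsto (h^{g})^{\beta(g)}$ (for $g \in M$) is again an action of $M$ on $H$, preserving $H_\phi$, and that the action of $M_p$ arising in the theorem is precisely its restriction to $M_p$. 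Hence every action that actually occurs is the restriction of an $M$-action on the relevant subgroup of $H$, so the corollary's assumption on crossed homomorphisms $M_p \to H^*$ applies to it; if one prefers, this is exactly the point at which observation~\ref{obs:hom_family_relative} permits invoking only the actions that genuinely arise. I expect this action-bookkeeping, rather than the indexing trick or the arithmetic of $\gcd$, to be the real obstacle.
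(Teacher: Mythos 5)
Your proof is correct and follows essentially the same route as the paper: index \(M\) by each \(p\)-torsion \(M_p\) via the projection, apply theorem~\ref{thm:hom_family_extended} to the resulting quadruple, and combine the primes using the coprimality of the factors \(\gcd(H, \ord M_p)\) (the paper phrases this combination as an \(\lcm\) and puts a subgroup \(H_p\) of order \(\gcd(H, \ord M_p)\) into the quadruple rather than \(H\) itself, but this is immaterial). Your closing paragraph matching the actions of \(M_p\) that actually arise with those covered by the hypothesis is extra care that the paper's one-line proof omits, not a deviation.
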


\begin{proof}
    Apply Theorem~\ref{thm:hom_family_extended} to each quadruple \((M \epi M_p; H_p \subset M \ltimes H)\), where \(H_p\) is a subgroup of \(H\) such that \(|H_p| = \gcd(H, |M_p|)\). It remains to note that, due to Sylow theorems, \(\gcd(H, |M|) = \lcm(\{|H_p| : \text{\(p\) is prime}\})\).
\end{proof}

The following corollary, together with our Theorem~\ref{thm:main}, yields that, for finite groups \(F\) and \(G\), \(|\Hom(F, G)|\) is divisible by \(\gcd(|G|, |F : F'|)\) if \(F/F'\) is a product of a cyclic group and a group with cube-free exponent.

\begin{corollary}\label{corol:annotated}
    Suppose that \(F\) and \(G\) are finite groups such that, for each \(p\)-torsion \((F/F')_p\) of \(F/F'\) and each subgroup \(H\) of \(G\) on which \((F/F')_p\) acts, the number of crossed homomorphisms \(M_p \to H\) is divisible by \(|H|\) provided that \(|H|\) divides \(|(F/F')_p|\). Then the number of homomorphisms \(F \to G\) is divisible by \(\gcd(|G|, |F : F'|)\).
\end{corollary}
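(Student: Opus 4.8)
The plan is to apply theorem~\ref{thm:hom_family_extended} once for each prime, exactly in the spirit of the proof of the reduction to $p$-groups, taking the ambient family of homomorphisms to be all of $\Hom(F, G)$. First I would index $F$ by the $p$-parts of its abelianization: writing $M = F/F'$ with abelianization epimorphism $\deg \colon F \to M$ and decomposing $M = \prod_q M_q$ into its torsion components $M_q = (F/F')_q$, I would set $\deg_p = \pi_p \circ \deg$, where $\pi_p \colon M \to M_p$ is the projection onto the $p$-component. Since both $\deg$ and $\pi_p$ are surjective, $\deg_p \colon F \to M_p$ is an epimorphism, so $F$ becomes an $M_p$-indexed group for every prime $p$.

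Next I would take $\Phi = \Hom(F, G)$ and verify the two conditions of theorem~\ref{thm:hom_family_extended} for the quadruple $(F \epi M_p; G \subset G)$. Both are immediate: condition~(1) holds because conjugating a homomorphism $F \to G$ by an element of $G$ again yields a homomorphism, and condition~(2) holds vacuously because every homomorphism $F \to G$ already lies in $\Phi$. The only substantive hypothesis of the theorem---that for each subgroup $H^*$ of $G$ with $|H^*|$ dividing $\ord M_p$ and every action of $M_p$ on $H^*$ the number of crossed homomorphisms $M_p \to H^*$ is divisible by $|H^*|$---is precisely the assumption placed on $F$ and $G$ in the corollary. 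Thus the theorem yields that $|\Hom(F, G)|$ is divisible by $\gcd(G, \ord M_p) = \gcd(|G|, \ord (F/F')_p)$.

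Finally I would assemble these per-prime divisibilities. Running over all primes $p$, the number $|\Hom(F, G)|$ is divisible by $\lcm_p \gcd(|G|, \ord M_p)$, and by the Sylow theorems (invoked exactly as in the reduction to $p$-groups) this least common multiple equals $\gcd(|G|, \ord M) = \gcd(|G|, |F : F'|)$. I do not anticipate a genuine obstacle: the verification of conditions~(1) and~(2) is trivial for the full homomorphism family, and the corollary's assumption is tailored to match the crossed-homomorphism hypothesis of theorem~\ref{thm:hom_family_extended}. The only point needing a moment's care is the arithmetic reassembly: because the orders $\ord M_p$ are pairwise coprime prime powers, comparing $p$-adic valuations shows that the $p$-part of $\gcd(|G|, |F : F'|)$ is exactly $\gcd(|G|, \ord M_p)$, so the least common multiple over $p$ recovers $\gcd(|G|, |F : F'|)$.
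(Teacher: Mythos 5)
Your proof is correct, and it takes a slightly different route from the paper's. The paper first invokes the preceding corollary (``reduction to \(p\)-groups'') to upgrade the per-prime hypothesis into the statement that the number of crossed homomorphisms \(F/F' \to H\) is divisible by \(\gcd(H, \ord F/F')\) for every subgroup \(H\) of \(G\), and then applies theorem~\ref{thm:hom_family_extended} a single time to the quadruple \((F \epi F/F';\ G \subset G)\); the Sylow bookkeeping is thus hidden inside that intermediate corollary. You instead index \(F\) by each \(p\)-torsion \(M_p\) separately via \(\pi_p \circ \deg\), apply theorem~\ref{thm:hom_family_extended} once per prime to \((F \epi M_p;\ G \subset G)\) with \(\Phi = \Hom(F,G)\), and perform the least-common-multiple assembly at the level of \(|\Hom(F,G)|\) rather than at the level of crossed homomorphisms. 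Both arguments rest on exactly the same hypothesis and the same (trivial) verification of conditions (1) and (2) for the full homomorphism family; yours is marginally more self-contained, since it bypasses the intermediate statement about crossed homomorphisms of the whole abelianization, while the paper's version factors the prime decomposition through a statement that is of independent interest. The arithmetic step you flag --- that \(\lcm_p \gcd(|G|, \ord M_p) = \gcd(|G|, |F:F'|)\) because the \(\ord M_p\) are pairwise coprime prime powers --- is exactly the Sylow observation the paper uses in its reduction-to-\(p\)-groups corollary, so there is no gap there.
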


\begin{proof}
    Use the previous corollary to conclude that the number of crossed homomorphisms \(F/F' \to H\) is divisible by \(\gcd(H, |F/F'|)\) for every subgroup \(H\) of \(G\), then apply Theorem~\ref{thm:hom_family_extended} to the quadruple \((F \epi F/F'; G \subset G)\).
\end{proof}

\section{Main theorem}

We aim to prove the following theorem (Asai-Yoshida conjecture for \(\Z/p^n\Z \times (\Z/p\Z)^m \times  (\Z/p^2\Z)^k\)).

\begin{theorem}\label{thm:main}
    Suppose that \(M = \Z/p^n\Z \times (\Z/p\Z)^m \times  (\Z/p^2\Z)^k\), where p is a prime, and \(H\) is a group on which \(M\) acts. Then the number of crossed homomorphisms \(M \to H\) is divisible by \(\gcd(H, |M|)\).
\end{theorem}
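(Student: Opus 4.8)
The plan is to first collapse the statement to the restricted Asai--Yoshida setting and then to induct on \(|H|\) through a central subgroup of order \(p\).

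\emph{Reduction.} By Corollary~\ref{corol:crossed} it suffices to prove that for every finite subgroup \(H^{*}\le H\) with \(|H^{*}|\) dividing \(\ord M\), and every action of \(M\) on \(H^{*}\), the number of crossed homomorphisms \(M\to H^{*}\) is divisible by \(|H^{*}|\); indeed \(\gcd(H,\ord M)\) is by definition the \(\lcm\) of exactly these subgroup orders. Since \(\ord M=p^{n+m+2k}\) is a power of \(p\), such an \(H^{*}\) is a finite \(p\)-group, so from now on I assume that \(H\) is a finite \(p\)-group with \(|H|\) dividing \(\ord M\), and I aim to show that \(|H|\) divides the number \(N(M,H)\) of crossed homomorphisms \(M\to H\). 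The case \(|H|=1\) is trivial, and the cyclic case is precisely Example~\ref{exmpl:finite_cyclic}; the latter will serve as the ultimate base of the induction.

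\emph{The central fibration.} I argue by induction on \(|H|\). Choose \(Z\le Z(H)\) with \(|Z|=p\) that is invariant under \(M\): the subgroup \(\Omega_{1}(Z(H))\) of elements of order dividing \(p\) in the centre is a nontrivial elementary abelian characteristic subgroup, and a \(p\)-group acting on a nonzero \(\mathbb{F}_{p}\)-space fixes a nonzero vector, so such a \(Z\) exists; moreover \(M\) acts \emph{trivially} on \(Z\), since \(\mathrm{Aut}(\Z/p\Z)\) has order prime to \(p\). Reduction along \(H\epi\bar H:=H/Z\) sends a crossed homomorphism \(M\to H\) to one \(M\to\bar H\). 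Two lifts of the same \(\bar\alpha\) differ by a map \(M\to Z\) which, \(Z\) being central with trivial \(M\)-action, is an ordinary homomorphism; hence each nonempty fibre is a torsor under \(\Hom(M,\Z/p\Z)\), of size \(p^{1+m+k}\). Writing \(\mathcal L\subseteq Z^{1}(M,\bar H)\) for the set of cocycles that admit a lift, this yields \(N(M,H)=p^{1+m+k}\,|\mathcal L|\).

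\emph{What remains, and the main obstacle.} Because \(|H|\) divides \(p^{n+m+2k}\), the target divisibility is automatic unless \(|H|\ge p^{2+m+k}\), in which case it amounts to \(|H|/p^{1+m+k}\) dividing \(|\mathcal L|\). By the inductive hypothesis applied to \(\bar H\), the order \(|\bar H|=|H|/p\) divides \(|Z^{1}(M,\bar H)|=N(M,\bar H)\), so it suffices to show that \(|Z^{1}(M,\bar H)|-|\mathcal L|\), the number of non-liftable cocycles, is divisible by \(|H|/p^{1+m+k}\); then so is \(|\mathcal L|\), and hence \(|H|\) divides \(p^{1+m+k}|\mathcal L|=N(M,H)\). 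Liftability of \(\bar\alpha\) is governed by an obstruction \(\omega(\bar\alpha)\in H^{2}(M,\Z/p\Z)\), formed from a chosen set-theoretic lift \(\alpha\) as \((x,y)\mapsto\alpha(x)^{y}\alpha(y)\alpha(xy)^{-1}\), with \(\mathcal L=\omega^{-1}(0)\); this class is constant on the orbits of the conjugation action of \(\bar H\) on \(Z^{1}(M,\bar H)\). I expect the genuine difficulty to lie exactly here: controlling the sizes of the fibres of \(\omega\) finely enough to extract the required power of \(p\). This is the step where the hypothesis on \(M\) is indispensable, and it is most transparent if one first peels off the cyclic factor \(\Z/p^{n}\Z\) by the crossed-homomorphism analogue of Lemma~\ref{lem:tail} together with the cyclic base case, reducing to exponent dividing \(p^{2}\). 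The relations coming from the factors of order \(p^{2}\) make the nonlinear part of \(\omega\) a map of ``degree at most two'', governed by the Bockstein of the extension \(\Z/p\Z\hookrightarrow\Z/p^{2}\Z\epi\Z/p\Z\); counting the zeros of such a map, following the explicit analysis of \cite{Yoshida_1993}, should yield the divisibility, whereas a factor \(\Z/p^{3}\Z\) would produce a genuinely cubic obstruction outside the reach of this method, consistent with the conjecture remaining open in that generality.
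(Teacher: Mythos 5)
Your reduction via Corollary~\ref{corol:crossed}, the choice of an \(M\)-invariant central subgroup \(Z\) of order \(p\), and the computation that each nonempty fibre of the reduction map \(Z^{1}(M,H)\to Z^{1}(M,\bar H)\) is a torsor under \(\Hom(M,\Z/p\Z)\) are all correct. But the proof stops exactly where it needs to begin. Everything reduces to showing that \(|H|/p^{1+m+k}\) divides the number of non-liftable cocycles in \(Z^{1}(M,\bar H)\), and for that step you offer only an expectation: that the fibres of the obstruction map \(\omega\colon Z^{1}(M,\bar H)\to H^{2}(M,\Z/p\Z)\) can be counted ``following the explicit analysis of \cite{Yoshida_1993}'' and that the \((\Z/p^{2}\Z)^{k}\) factors produce a ``degree at most two'' obstruction whose zero set ``should'' have the right cardinality. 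No such count is carried out, and this is precisely the content of the theorem --- the Asai--Yoshida conjecture is open in general exactly because the fibres of \(\omega\) are not controlled by any known general principle. The auxiliary suggestion that one can first ``peel off'' the \(\Z/p^{n}\Z\) factor using the cyclic base case is likewise unjustified: the number of crossed homomorphisms does not factor over the direct factors of \(M\), since the \(\phi\)-cores appearing in Lemma~\ref{lem:tail} vary with \(\phi\).

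For comparison, the paper inducts on \(M\) rather than on \(|H|\) and avoids the obstruction count entirely. Writing \(M=M_{0}\times\Z/p^{s}\Z\) with \(s\in\{1,2\}\), Lemma~\ref{lem:large_center} settles the induction step whenever \(Z_{H}=H\cap Z(M\ltimes H)\) contains a subgroup of order \(p^{s}\), by shifting sections by elements of \(\Hom(\Z/p^{s}\Z,Z_{H})\); this already disposes of \(s=1\). For \(s=2\) the sections \(M\to M\ltimes H\) are embedded into the homomorphisms from \(\hat M=M_{0}\times\langle\hat a\rangle_{\infty}\), whose number is divisible by \(|H|\) by Theorem~\ref{thm:hom_family} (the BKV theorem for \(0\)-indexed groups), and a dichotomy based on Observation~\ref{obs:hom_family_relative} shows that either the relevant difference of cardinalities is already divisible by \(|H|\) or one manufactures an element of order \(p^{2}\) in \(Z_{H}\), returning to Lemma~\ref{lem:large_center}. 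If you want to salvage your fibration approach, you would have to supply the actual zero-count for the quadratic obstruction, which is a substantial piece of work not present in your argument.
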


However, due to Corollary~\ref{corol:crossed}, we only need to prove its restricted version.

\begin{theorem}
    Suppose that \(M = \Z/p^n\Z \times (\Z/p\Z)^m \times  (\Z/p^2\Z)^k\), where p is a prime, and \(H\) is a group on which \(M\) acts. Then the number of crossed homomorphisms \(M \to H\) is divisible by \(|H|\) provided that \(|H|\) divides \(|M|\).
\end{theorem}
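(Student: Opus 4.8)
Write $c(A,H)$ for the number of crossed homomorphisms from a finitely generated abelian group $A$ to $H$ (for a fixed action). The plan is to induct on $\ord M$, peeling off one cyclic direct factor at a time. Since $|H|$ divides $\ord M=p^{n+m+2k}$, the group $H$ is a finite $p$-group; the base case $M=1$ is trivial and the purely cyclic case $M=\langle a\rangle_{p^n}$ is covered by Example~\ref{exmpl:finite_cyclic}. For the step, write $M=\bar M\times\langle g\rangle$ with $\langle g\rangle$ a cyclic direct factor of order $p^n$, $p$ or $p^2$, so that $\bar M$ is again of the admissible form (with smaller parameters, a trivial factor being allowed). Applying the method of theorem~\ref{thm:hom_family} inside the semidirect product $M\ltimes H$, with degree the projection $M\to\bar M$ so that $\ker\deg=\langle g\rangle$, I would split $c(M,H)$ into similarity classes of sections; since $H$ is normal in $M\ltimes H$, the $\alpha$-core is the twisted-fixed subgroup $H_\alpha=\{h\in H: h^{g^i}=\alpha(g^i)\,h\,\alpha(g^i)^{-1}\ \text{for all }i\}$, and each class contributes $|H:H_\alpha|\cdot c(\bar M,H_\alpha)$.

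Now I would distinguish two cases according to the size of the core. When $|H_\alpha|$ divides $\ord\bar M=\ord M/\ord g$, the inductive hypothesis (the restricted conjecture for the smaller group $\bar M$ and its subgroup $H_\alpha$) gives $|H_\alpha|\mid c(\bar M,H_\alpha)$, whence the class contributes a multiple of $|H:H_\alpha|\cdot|H_\alpha|=|H|$. Summing over classes then yields $|H|\mid c(M,H)$ in this case.

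The genuinely hard case is when the core $H_\alpha$ is too large for the inductive hypothesis, i.e. when the twisted action of $g$ is close to trivial; here the cyclic direction $\langle g\rangle$ must itself supply the missing divisibility. I would handle this by fibring instead over the restriction $\alpha\mapsto\alpha|_{\bar M}$: for a fixed crossed homomorphism $\beta\colon\bar M\to H$, the extensions over $g$ are the solutions $v=\alpha(g)$ of the compatibility relation $v^{\bar m}\beta(\bar m)=\beta(\bar m)^g v$ (for all $\bar m\in\bar M$) together with the order relation $v^{g^{\ord g-1}}\cdots v^g v=1$; the former confines $v$ to a coset of a twisted-fixed subgroup, and on that coset the latter identifies the fibre with the crossed homomorphisms from the cyclic group $\langle g\rangle$, so that a Frobenius-type count controls its size modulo the relevant power of $p$.

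I expect the main obstacle to be precisely this compensation step for the order-$p^2$ factors. For a factor of order $p$ the trivial-action case is settled by the Frobenius theorem, and for a single factor of order $p^2$ this is essentially the computation of Asai; but with $k$ factors of order $p^2$ the intermediate subgroups $\langle g^p\rangle$ of order $p$ interact, and one must run the argument through the two-step filtration $1\subset\langle g^p\rangle\subset\langle g\rangle$ uniformly in $k$. The technical heart of the proof should therefore be a divisibility lemma, in the spirit of Yoshida, counting the solutions of the twisted equations $v^{p}=\ast$ and $v^{p^2}=1$ simultaneously across all the order-$p^2$ directions, fed into an induction on $k$.
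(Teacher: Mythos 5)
Your reduction to similarity classes and the split into two cases is exactly where the paper starts: when \(|H_\alpha|\) divides \(\ord \bar M\) the inductive hypothesis finishes the class, and the whole difficulty is concentrated in the case of an oversized core. But for that case you only offer a plan (``a Frobenius-type count controls its size'', ``a divisibility lemma in the spirit of Yoshida''), and this plan does not work as stated: fixing \(\beta=\alpha|_{\bar M}\) and counting the solutions \(v=\alpha(g)\) of the commutation relations together with \(v^{g^{\ord g-1}}\cdots v^g v=1\) confines \(v\) to a coset of a twisted-fixed subgroup whose order need not divide \(p^2\), the fibre cardinalities vary with \(\beta\), and recovering divisibility by \(|H|\) from the sum over \(\beta\) is essentially the original problem again --- this is precisely the point where the general Asai--Yoshida conjecture remains open. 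So the hard case is a genuine gap, not a routine verification.

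The paper closes it with two devices absent from your proposal. First (lemma~\ref{lem:large_center}), if \(Z_H=H\cap Z(M\ltimes H)\) has order divisible by \(p^s\), one shifts sections by homomorphisms \(\Z/p^s\Z\to Z_H\); combining this free action with conjugation by \(H\) makes every orbit contribute a multiple of \(\gcd(|H_\phi|,\ord M_0)\cdot\lcm(|H:H_\phi|,p^s)\), which is divisible by \(|H|\) even when the core is all of \(H\). This alone settles peeling off a \(\Z/p\Z\) factor, since \(Z_H\neq 1\) for a nontrivial normal subgroup of a finite \(p\)-group. Second, for a \(\Z/p^2\Z\) factor one must manufacture an element of \(Z_H\) of order \(p^2\): the paper compares the sections of \(M=M_0\times\langle a\rangle_{p^2}\) with the analogous homomorphisms from \(\hat M=M_0\times\langle\hat a\rangle_\infty\) (whose number is divisible by \(|H|\) by theorem~\ref{thm:hom_family} with infinite cyclic degree), and shows that if the easy case fails for both counts one obtains central elements \(\phi(a^p)\) of order \(p\) and \(\psi(\hat a^p)\) of order at least \(p^2\) with the same image in \(M\), whence \(Z_H\) contains an element of order at least \(p^2\) and the shifting lemma applies. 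Without some substitute for these two steps your induction does not close.
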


We are going to use the fact that crossed homomorphisms \(M \to H\) correspond to homomorphisms \(M \to M \ltimes H\) such that \(a \mapsto (a, h_a)\) for all \(a \in A\) and some \(h_a \in H\) (this way it is possible to analyze the number of crossed homomorphisms using Theorem~\ref{thm:hom_family}). Such homomorphisms \(M \to M \ltimes H\) we call \textit{sections \(M \to M \ltimes H\)}.

Our proof uses induction on the size of the group \(M\). If \(M\) is cyclic, then the number of crossed homomorphisms \(M \to H\) is precisely \(|H|\) due to the example~\ref{exmpl:finite_cyclic}. Otherwise, we can assume that \(M = M_0 \times \Z/p^s\Z\), where \(s \in \{1, 2\}\), and that the induction hypothesis holds for each proper quotient group of \(M\).

Start with an observation that, if the subgroup \(Z_H\) consisting of all elements of \(H\) which centralize both \(H\) and \(M\) (that is, \(Z_H = H \cap Z(M \ltimes H)\)) is large enough, then we can prove the induction step as follows, via ``shifting" our crossed homomorphisms by elements of \(\Hom(\Z/p^s\Z, Z_H)\).

\begin{lemma}\label{lem:large_center}
    Suppose that \(M = M_0 \times  \Z/p^s\Z\), the induction hypothesis holds for \(M_0\) and the order of the subgroup \(Z_H = H \cap Z(M \ltimes H) \subset H\) is divisible by \(p^s\). Then the number of sections \(M \to M \ltimes H\) (which is equal to the number of crossed homomorphisms \(M \to H\)) is divisible by \(|H|\).
\end{lemma}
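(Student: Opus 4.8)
The plan is to combine two sources of \(p\)-divisibility: a \emph{central shift} along the \(\Z/p^s\Z\)-factor that exploits the hypothesis \(p^s\mid|Z_H|\), and the induction hypothesis for \(M_0\), fed into theorem~\ref{thm:hom_family} through the projection \(\deg\colon M=M_0\times\Z/p^s\Z\epi M_0\). Throughout I use that, in the situation of the restricted theorem, \(|H|\) divides \(\ord M=p^N\), so \(H\) is a finite \(p\)-group and \(Z_H\) is a \(p\)-group admitting subgroups of every order dividing \(|Z_H|\).

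First I would set up the shift. For \(\delta\in\Hom(\Z/p^s\Z,Z_H)\) and a crossed homomorphism \(\alpha\colon M\to H\) put \((\delta\cdot\alpha)(m)=\alpha(m)\,\delta(\pi m)\), where \(\pi\colon M\to\Z/p^s\Z\) is the projection. Since every element of \(Z_H\) is \(M\)-fixed and centralizes \(H\), a direct computation shows that \(\delta\cdot\alpha\) is again a crossed homomorphism and that this is a free action of \(\Hom(\Z/p^s\Z,Z_H)\) on the set \(S\) of crossed homomorphisms \(M\to H\). As \(p^s\mid|Z_H|\), the order of \(\Hom(\Z/p^s\Z,Z_H)\) is divisible by \(p^s\); moreover the shift does not change \(\alpha|_{M_0}\), so it acts freely on each fibre of the restriction map \(S\to S_0\) onto the crossed homomorphisms \(M_0\to H\).

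Next comes the induction, via theorem~\ref{thm:hom_family} applied to \(\Phi=S\) (viewed as sections \(M\to M\ltimes H\)) with \(\deg\) the projection onto \(M_0\); here conditions (1) and (2) are automatic, since conjugation by \(H\) preserves sections and a homomorphism with the same tail as a section is again a section. If \(|H|\) divides \(\ord M_0\) the theorem applies directly with target \(H\) — its hypothesis, divisibility of the number of crossed homomorphisms \(M_0\to H^*\) by \(|H^*|\) for subgroups \(H^*\le H\), is exactly the induction hypothesis for \(M_0\) — and we get \(|H|\mid|S|\) with no use of the center. The genuine case is \(|H|\nmid\ord M_0\), where the theorem only yields divisibility of \(|S|\) by \(\gcd(H,\ord M_0)\), a proper divisor of \(|H|\) differing from it by a power of \(p\) dividing \(p^s\). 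To recover the deficit I would pass to a central quotient: choose \(Z_1\le Z_H\) with \(|Z_1|=|H|/\gcd(|H|,\ord M_0)\) (possible, as this number divides \(p^s\), hence \(|Z_H|\)) and form the \(M\)-group \(\bar H=H/Z_1\), so \(|\bar H|\) divides \(\ord M_0\). Then theorem~\ref{thm:hom_family} applies cleanly to all crossed homomorphisms \(M\to\bar H\) (every subgroup of \(\bar H\) is covered by the induction hypothesis for \(M_0\)), giving \(|\bar H|\mid|\bar S|\); and the reduction map \(q\colon S\to\bar S\) has non-empty fibres that are torsors under \(\Hom(M,Z_1)\), whose order is divisible by \(|Z_1|\) through the factor \(\Hom(\Z/p^s\Z,Z_1)=Z_1\). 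Since \(|Z_1|\cdot|\bar H|=|H|\), surjectivity of \(q\) would finish the proof.

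The step I expect to be the main obstacle is precisely that \(q\) need not be surjective: lifting a crossed homomorphism \(M\to\bar H\) to \(M\to H\) through the central extension \(1\to Z_1\to H\to\bar H\to1\) is obstructed, so one cannot simply multiply \(|\Hom(M,Z_1)|\) by \(|\bar S|\), and in general the liftable image is a non-tail-closed subset of \(\bar S\) to which theorem~\ref{thm:hom_family} does not reapply. Controlling the \(p\)-divisibility of this liftable image is the heart of the matter. My intended route is to identify the crossed homomorphisms \(M_0\to H\) that extend to \(M\) with those fixed, up to \(H\)-conjugacy, by the induced action of the cyclic group \(\Z/p^s\Z\) on \(S_0\), and then to extract the missing power of \(p\) from a \(p\)-group fixed-point count for this action together with the induction hypothesis for \(M_0\); making this fixed-point bookkeeping interact correctly with the central shift of the second paragraph is the delicate point.
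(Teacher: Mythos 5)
There is a genuine gap, and it is the one you flag yourself: your argument funnels the induction through the reduction \(q\colon S\to\bar S\) modulo a central subgroup \(Z_1\le Z_H\), but \(q\) need not be surjective, its image need not be stable under the operations that theorem~\ref{thm:hom_family} requires, and the ``fixed-point bookkeeping'' you propose to control the liftable part of \(\bar S\) is never carried out. As written, the proposal establishes divisibility of \(|S|\) only by \(\gcd(H,\ord M_0)\), which is weaker than \(|H|\) exactly in the case where the lemma has content.

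The paper's proof uses the same two ingredients you identify --- the free shift by \(\Hom(\Z/p^s\Z,Z_H)\) and the induction hypothesis for \(M_0\) fed through the quadruple \((M\epi M_0;\,H\subset M\ltimes H)\) --- but combines them on the set of \emph{tails} rather than via a quotient of \(H\), which sidesteps the lifting problem entirely. With \(\deg\) the projection onto \(M_0\), the tail of a section \(\phi\) records \(\phi|_{\Z/p^s\Z}\) together with the coset map; the shift changes \(\phi|_{\Z/p^s\Z}\), hence acts freely on tails, and it commutes with the conjugation action of \(H\), whose orbit on the tail of \(\phi\) has size \(|H:H_\phi|\). So the combined orbit of a tail has length divisible by \(\lcm(|H:H_\phi|,p^s)\), while by lemma~\ref{lem:tail} and the induction hypothesis the number of sections over each tail equals the number of crossed homomorphisms \(M_0\to H_\phi\) and is divisible by \(\gcd(|H_\phi|,\ord M_0)\). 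The product \(\gcd(|H_\phi|,\ord M_0)\cdot\lcm(|H:H_\phi|,p^s)\) is divisible by \(|H|\) for every \(\phi\) (an elementary check on \(p\)-adic valuations, using \(|H|\mid\ord M_0\cdot p^s\)); this is precisely what your two-case split into \(|H|\mid\ord M_0\) versus \(|H|\nmid\ord M_0\) was trying, and failing, to achieve. Note also that the useful free action is on tails, which record the restriction to \(\Z/p^s\Z\), not on the fibres of the restriction map \(S\to S_0\) to \(M_0\) as in your first paragraph: those fibres do not admit the count via \(H_\phi\) that the induction hypothesis needs.
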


\begin{proof}

    Consider tails of sections \(\phi \colon M \to M \ltimes H\) with respect to the quadruple \((M \epi M_0; H \subset M \ltimes H)\), that is, pairs \((\phi_0, \phi_H)\) where \(\phi_0 = \phi|_{\ker \deg} = \phi|_{\Z/p^s\Z}\), and \(\phi_H \colon M \to (M \ltimes H)/H\) is given by \(\phi_H{:}\; a \mapsto \phi(a)H\). 

    Note that the condition \(\psi_H = \phi_H\) implies that any homomorphism \(M \to M \ltimes H\) having the same tail as a section \(\phi\) is also a section. The number of sections having the same tail as \(\phi\) therefore equals the number of crossed homomorphisms \(M_0 \to H_\phi\) (by Lemma~\ref{lem:tail}), and hence is divisible by \(\gcd(H_\phi, |M_0|)\).

    As in the proof of Theorem~\ref{thm:hom_family}, \(H\) acts on the tails of sections by conjugation, and the number of tails conjugate to a tail of a section \(\phi\) is precisely \(|H : H_\phi|\).

    On the other side, since the group \(Z_H \subset H\) centralizes \(M \ltimes H\), one can shift an arbitrary section \(\phi \colon (M = M_0 \times \Z/p^s\Z) \to M \ltimes H\) by an arbitrary homomorphism \(\alpha \colon \Z/p^s\Z \to Z_H\) to obtain another section given by
    \[ M_0 \times \Z/p^s\Z \ni (a, b) \mapsto \phi(a,b) \cdot \alpha(b) \in M \ltimes H. \]
    Those shifts clearly define an action of the group \(\Hom(\Z/p^s\Z, Z_H)\) on the tails of sections, and the length of any orbit is equal to \(|\Hom(\Z/p^s\Z, Z_H)|\), because only the trivial element of \(\Hom(\Z/p^s\Z, Z_H)\) fixes \(\phi_0 = \phi|_{\Z/p^s\Z}\). On the other side, \(|\Hom(\Z/p^s\Z, Z_H)|\) is divisible by \(p^s\), because \(|Z_H| \ge p^s\). 

    It remains to note that the action of \(\Hom(\Z/p^s\Z, Z_H)\) commutes with the conjugation action of \(H\), so we can combine those actions to get an action of \(H \times \Hom(\Z/p^s\Z, Z_H)\). Length of an orbit of a tail of a section \(\phi\) with respect to this action is divisible by \(\lcm(|H : H_\phi|, p^s)\), and the number of sections having tail in such an orbit is a multiple of
    \[\gcd(|H_\phi|, |M_0|) \cdot \lcm(|H : H_\phi|, p^s)\]
    which is divisible by \(\gcd(|H|, |M|) = |H|\).

    This yields a subdivision of the set of all sections \(M \to M \ltimes H\) into orbits, each of which has a size divisible by \(|H|\).
    
\end{proof}

Lemma~\ref{lem:large_center} in particular proves the induction step for \(M = M_0 \times \Z/p\Z\), since \(Z_H = H \cap Z(M \ltimes H)\) is nontrivial unless \(H\) is trivial (as an intersection of the center and a normal subgroup in a finite \(p\)-group).

The proof of the induction step for \(M = M_0 \times \Z/p^2\Z\) goes as follows. Switch to multiplicative notation, so \(M = M_0 \times \langle a \rangle_{p^2}\). We consider a larger group \(\hat M = M_0 \times \langle \hat a \rangle_\infty\), and identify \(M\) with \(\hat M/\langle \hat a^{p^2} \rangle\). Then we embed the set of all sections \(M \to M \ltimes H\) into a set
\begin{multline*}
    \Phi = \{\phi \colon \hat M \to M \ltimes H : \text{\(\phi\) is a homomorphism,} \\ \text{and \(\phi(x) = (\pi(x), h_x)\) for all \(x \in \hat M\) and some \(h_x \in H\)}\}
\end{multline*}
(where \(\pi\) is the projection of \(\hat M\) onto \(M\)) as a subset 
\[\hat \Phi = \{\phi \in \Phi : \phi(\hat a^{p^2}) = 1\}\]
It follows from Theorem~\ref{thm:hom_family} applied to the quadruple \((\hat M \epi \langle \hat a \rangle_\infty; H \subset M \ltimes H)\) (that is, BKV theorem in the case of a \(0\)-indexed group \(\hat M\)) that \(|\Phi|\) is divisible by \(|H|\). We show that either Observation~\ref{obs:hom_family_relative} forces the number of homomorphisms from \(\Phi\) \textit{which do not belong to \(\hat \Phi\)} to be divisible by \(|H|\), or it is possible to obtain a cyclic subgroup of \(Z_H = H \cap Z(M \ltimes H)\) of order \(p^2\), and then Lemma~\ref{lem:large_center} completes the proof.

We divide our argument into three lemmata.

\begin{lemma}
    Under the above notation \(|\Phi|\) is finite and divisible by \(|H|\).
\end{lemma}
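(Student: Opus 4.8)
The plan is to read both assertions off Theorem~\ref{thm:hom_family}, applied exactly as the surrounding text indicates: to the quadruple \((\hat M \epi \langle \hat a \rangle_\infty; H \subset M \ltimes H)\), where the degree map is the projection of \(\hat M = M_0 \times \langle \hat a \rangle_\infty\) onto its infinite cyclic factor. The indexing group is then \(\langle \hat a \rangle_\infty\), so \(\ord \langle \hat a \rangle_\infty = 0\), and by the divisibility conventions of the paper \(|H|\) divides \(0\); hence the order hypothesis of Theorem~\ref{thm:hom_family} is automatic. This is precisely the \(0\)-indexed (BKV) instance of the theorem.

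I would dispose of finiteness first. Since \(M\) and \(H\) are finite, the group \(M \ltimes H\) is finite, and \(\hat M\) is finitely generated; a homomorphism \(\hat M \to M \ltimes H\) is determined by the images of the finitely many generators, each with at most \(|M \ltimes H|\) choices, so \(\Hom(\hat M, M \ltimes H)\) is finite. As \(\Phi\) is a subset of this set, \(|\Phi|\) is finite.

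For the divisibility I would check the remaining hypotheses of Theorem~\ref{thm:hom_family}, the key structural observation being that in \(M \ltimes H\), whose multiplication is \((m, x)(m', y) = (mm', x^{m'}y)\), a left coset of \(H = \{1\}\times H\) equals \(\{m\}\times H\) and is thus determined by the first coordinate. Consequently: (i) conjugation by an element \((1,h)\in H\) fixes first coordinates, so it sends a section \(\phi\) (one with \(\phi(x)\in\{\pi(x)\}\times H\) for all \(x\)) to another section, giving invariance of \(\Phi\) under \(H\)-conjugation, which is condition~1; and (ii) if a homomorphism \(\psi\) has the same tail as some \(\phi\in\Phi\), then \(\psi(x)H = \phi(x)H = \{\pi(x)\}\times H\) forces \(\psi(x)\in\{\pi(x)\}\times H\), so \(\psi\in\Phi\), which is condition~2 (the \(\phi_0\)-part of the tail is not even needed). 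Finally, the crossed-homomorphism hypothesis is free here: by Example~\ref{exmpl:infinite_cyclic}, for the infinite cyclic indexing group \(\langle \hat a \rangle_\infty\) the crossed homomorphisms into any subgroup \(H^*\) biject with the elements of \(H^*\), so their number is \(|H^*|\), which is divisible by \(|H^*|\). With all hypotheses verified, Theorem~\ref{thm:hom_family} gives that \(|H|\) divides \(|\Phi|\).

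I expect no real obstacle in this lemma; it is the routine base step of the \(\Z/p^2\Z\) induction. The only point demanding care is the semidirect-product bookkeeping — the multiplication law, the shape \(\{m\}\times H\) of the left cosets of \(H\), and the fact that \(H\)-conjugation preserves first coordinates — since it is exactly this bookkeeping that makes conditions~1 and~2 hold and lets us invoke the \(0\)-indexed case of Theorem~\ref{thm:hom_family}.
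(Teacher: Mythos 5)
Your proof is correct and follows the same route as the paper: both apply Theorem~\ref{thm:hom_family} to the quadruple \((\hat M \epi \langle \hat a \rangle_\infty; H \subset M \ltimes H)\) and get finiteness from finite generation of \(\hat M\) and finiteness of \(M \ltimes H\). You merely spell out the verification of the hypotheses (coset bookkeeping in the semidirect product and Example~\ref{exmpl:infinite_cyclic} for the crossed-homomorphism condition) that the paper leaves implicit.
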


\begin{proof} 
    Family \(\Phi\) satisfies the conditions of Theorem~\ref{thm:hom_family} applied to the quadruple \((\hat M \epi \langle \hat a \rangle_{\infty}; H \subset M \ltimes H)\), hence \(|\Phi|\) is divisible by \(|H|\). Finiteness is obvious since \(\Phi\) is a set of homomorphisms from a finitely generated group to a finite group.
\end{proof}

\begin{lemma}
    Either the number of sections \(M \to M \ltimes H\) is divisible by \(|H|\), or there is a section \(\phi\) such that \(\phi(a^p) \in Z(M \ltimes H)\).
\end{lemma}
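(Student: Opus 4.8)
The plan is to reformulate everything through the single invariant \(\tau(\phi) := \phi(\hat a^{p^2})\). Since \(\hat a^{p^2}\in\ker\pi\), this value always lies in the normal subgroup \(H\subset M\ltimes H\), so \(\tau\) is a map \(\Phi\to H\), and by construction \(\hat\Phi=\tau^{-1}(1)\); that is, the sections \(M\to M\ltimes H\) are exactly the fibre of \(\tau\) over \(1\). Conjugating \(\phi\) by \(h\in H\) replaces \(\tau(\phi)\) by \(h^{-1}\tau(\phi)h\), so \(\tau\) is equivariant for the two conjugation actions and \(\hat\Phi\) — hence also \(\Phi\setminus\hat\Phi\) — is \(H\)-invariant. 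My aim is to apply observation~\ref{obs:hom_family_relative} to the set \(\Phi\setminus\hat\Phi\) with respect to the quadruple \((\hat M\epi\langle\hat a\rangle_\infty;H\subset M\ltimes H)\): if its hypotheses hold, then \(|\Phi\setminus\hat\Phi|\) is divisible by \(|H|\), and since \(|\Phi|\) is divisible by \(|H|\) the number of sections \(|\hat\Phi|=|\Phi|-|\Phi\setminus\hat\Phi|\) is divisible by \(|H|\) as well. This is the first alternative of the lemma.

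Next I would verify the hypotheses of observation~\ref{obs:hom_family_relative} for \(\Phi\setminus\hat\Phi\). Invariance under \(H\)-conjugation was just noted, and the relative divisibility condition is automatic here, because for every subgroup the number of crossed homomorphisms from the infinite cyclic group \(\langle\hat a\rangle_\infty\) equals its order (example~\ref{exmpl:infinite_cyclic}), while \(|H_\phi|\) trivially divides \(\ord\langle\hat a\rangle_\infty=0\). Thus the only hypothesis that can fail is the closure condition~(2). Since \(\ker\deg=M_0\) and all members of \(\Phi\) share the same coset map, two homomorphisms in \(\Phi\) have the same tail precisely when they agree on \(M_0\); by lemma~\ref{lem:tail} and example~\ref{exmpl:infinite_cyclic} the tail class of \(\phi\) is indexed by \(H_\phi\), and along it \(\tau\) runs through \(\tau(\phi)\cdot\{\alpha(\hat a^{p^2})\}\) as \(\alpha\) ranges over the crossed homomorphisms \(\langle\hat a\rangle_\infty\to H_\phi\). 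Hence condition~(2) holds for \(\Phi\setminus\hat\Phi\) exactly when every tail class is homogeneous — lying entirely inside \(\hat\Phi\) or entirely outside it — in which case the first alternative holds. I may therefore assume that some tail class is \emph{mixed}, containing a non-section \(\phi\) (with \(\tau(\phi)\neq1\)) and a section \(\psi\) agreeing with \(\phi\) on \(M_0\).

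It remains to produce, from such a mixed tail class, a section whose value at \(a^p\) is central in \(M\ltimes H\); this is where the real work lies. Writing \(g=\phi(\hat a)=(a,h_0)\) and \(g'=\psi(\hat a)=g\,(1,c)\) with \(c\in H_\phi\), I have \(g^{p^2}=(1,t)\) with \(t=\tau(\phi)\neq1\) while \(g'^{p^2}=1\); both elements centralize \(\phi(M_0)\), and since the image of \(a\) has order \(p^2\) the element \(g'\) has order exactly \(p^2\), so \(g'^p=\psi(a^p)\) has order \(p\) with first coordinate \(a^p\). Centrality of \((a^p,s)=\psi(a^p)\) is equivalent to the action of \(a^p\) on \(H\) being the inner automorphism by \(s\) together with \(s\) being fixed by \(M\), and I would try to establish this by exploiting the contrast \(g^{p^2}\neq1=g'^{p^2}\) inside the finite \(p\)-group \(M\ltimes H\), using that \(a^{p^2}\) acts trivially on \(H\) to pin down the \(p\)-th power of the automorphism induced by \(g'^p\), if necessary replacing \(\psi\) by another section in the same or a neighbouring class whose \(a^p\)-value is forced to commute with all of \(H\). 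The main obstacle is exactly this last step: a mixed tail class only controls the behaviour on the subgroup \(H_\phi\), whereas centrality is a condition on all of \(H\), so the delicate point is to leverage the high-order element \(g\) to make the action of \(a^p\) genuinely inner rather than merely inner modulo \(H_\phi\).
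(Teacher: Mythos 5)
Your first alternative is handled correctly: if every tail class for the quadruple \((\hat M\epi\langle\hat a\rangle_\infty;H\subset M\ltimes H)\) lies entirely inside or entirely outside \(\hat\Phi\), then observation~\ref{obs:hom_family_relative} applies to \(\Phi\setminus\hat\Phi\) and, together with the divisibility of \(|\Phi|\), gives that the number of sections \(|\hat\Phi|\) is a multiple of \(|H|\). But the second half of your argument has a genuine gap, which you acknowledge yourself. A mixed tail class only hands you a non-section \(\phi\) and a section \(\psi=\phi\cdot\alpha_c\) differing by a crossed homomorphism into \(H_\phi=C_H(\phi(M_0))\); everything you can extract from the contrast \(\phi(\hat a)^{p^2}\neq 1=(\phi(\hat a)c)^{p^2}\) lives inside the subgroup \(\langle\phi(\hat a)\rangle\cdot H_\phi\), whereas the conclusion \(\psi(a^p)\in Z(M\ltimes H)\) requires \(\psi(a^p)\) to centralize \emph{all} of \(H\). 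There is no mechanism in your setup to upgrade ``inner modulo \(H_\phi\)'' to ``central in \(M\ltimes H\)'', and the sketch (``replacing \(\psi\) by another section whose \(a^p\)-value is forced to commute with all of \(H\)'') is not an argument. So the implication ``mixed class \(\Rightarrow\) second alternative'' is unproven, and I see no reason it should hold; your dichotomy is simply not the one that produces the central element.

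The paper's proof uses a different quadruple and never touches \(\Phi\) or \(\hat\Phi\) for this lemma: it applies observation~\ref{obs:hom_family_relative} to the set of \emph{all sections} \(M\to M\ltimes H\) with respect to \((M\epi M/\langle a^p\rangle;\,H\subset M\ltimes H)\). Here \(\ker\deg=\langle a^p\rangle\), so \(H_\phi=C_H(\phi(a^p))\), and condition (2) is automatic for the full set of sections. Since \(|H|\) divides \(|M|\), every \emph{proper} subgroup of \(H\) has order dividing \(|M|/p=\ord\bigl(M/\langle a^p\rangle\bigr)\), and the induction hypothesis for the proper quotient \(M/\langle a^p\rangle\cong M_0\times\Z/p\Z\) supplies the required divisibility of the number of crossed homomorphisms \(M/\langle a^p\rangle\to H_\phi\). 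Hence the only obstruction to concluding \(|H|\) divides the number of sections is a section \(\phi\) with \(H_\phi=H\), i.e.\ \(\phi(a^p)\) centralizing \(H\); since \(M\) is abelian, \(\phi(a^p)\) also centralizes \(\phi(M)\), and \(\phi(M)\cdot H=M\ltimes H\), which is exactly the second alternative. If you want to salvage your approach, this is the dichotomy to aim for: the obstruction must be ``\(H_\phi=H\) for the relevant quadruple'', not ``some tail class is mixed''.
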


\begin{proof}
    Consider the quadruple \((M \epi M/\langle a^p \rangle; H \subset M \ltimes H)\). Since \(|H|\) divides \(|M|\), the order of any proper subgroup of \(H\) divides \(|M|/p = |M/\langle a^p \rangle|\). Hence, and also because the induction hypothesis holds for \(M/\langle a^p \rangle\), we can apply Observation~\ref{obs:hom_family_relative} to the set of all sections \(M \to M \ltimes H\), unless there is a section \(\phi\) for which \(H_\phi = H\). But \(H_\phi\) is centralized by \(\phi(\ker \deg) = \phi(\langle a^p \rangle)\), and if \(H_\phi = H\) then \(\phi(a^p)\) centralizes \(\phi(M) \cdot H = M \ltimes H\).
\end{proof}

\begin{lemma}
    Either the number of elements of \(\Phi \setminus \hat \Phi\) is divisible by \(|H|\), or there is a homomorphism \(\psi \in \Phi \setminus \hat \Phi\) such that \(\psi(\hat a^p) \in Z(M \ltimes H)\).
\end{lemma}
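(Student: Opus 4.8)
The plan is to apply observation~\ref{obs:hom_family_relative} to the family \(\Phi \setminus \hat\Phi\), this time with respect to the quadruple \((\hat M \epi \hat M/\langle \hat a^p\rangle; H \subset M \ltimes H)\); here the degree is the canonical projection \(\deg \colon \hat M \to \bar M := \hat M/\langle \hat a^p\rangle \cong M/\langle a^p\rangle\), so that \(\ker \deg = \langle \hat a^p\rangle\). The crucial point behind this choice is that \(\hat a^{p^2} = (\hat a^p)^p\) lies in \(\ker\deg\), so that the value \(\phi(\hat a^{p^2})\) is part of the tail of \(\phi\); this is what will let \(\Phi\setminus\hat\Phi\) (and not merely \(\Phi\)) satisfy the hypotheses of theorem~\ref{thm:hom_family}.

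First I would check that \(\Phi\setminus\hat\Phi\) is invariant under conjugation by \(H\) and closed under taking homomorphisms with the same tail. For the first, conjugation by an element of \(H\) fixes the first coordinate of each value, so it preserves \(\Phi\); and it sends \(\phi(\hat a^{p^2}) = (1, c)\) to an element of the form \((1, c^{h})\), which is trivial if and only if \(c\) is, so it preserves the condition \(\phi(\hat a^{p^2}) \ne 1\) defining \(\Phi\setminus\hat\Phi\). For the second, if \(\psi\) has the same tail as \(\phi \in \Phi\setminus\hat\Phi\) then \(\psi(x)H = \phi(x)H = \{\pi(x)\}\times H\) for every \(x\), forcing \(\psi(x) \in \{\pi(x)\}\times H\) and hence \(\psi \in \Phi\); moreover \(\psi\) agrees with \(\phi\) on \(\ker\deg \ni \hat a^{p^2}\), so \(\psi(\hat a^{p^2}) = \phi(\hat a^{p^2}) \ne 1\) and therefore \(\psi \in \Phi\setminus\hat\Phi\).

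With the hypotheses of theorem~\ref{thm:hom_family} in place, I would run the now-familiar dichotomy. If \(H_\psi\) is a proper subgroup of \(H\) then (as \(H\) is a \(p\)-group) \(|H_\psi|\) divides \(|H|/p\), hence divides \(\ord \bar M = \ord M/p\), and since the induction hypothesis applies to the proper quotient \(\bar M\) the number of crossed homomorphisms \(\bar M \to H_\psi\) is divisible by \(\gcd(H_\psi, \ord\bar M) = |H_\psi|\). So if every \(\psi \in \Phi\setminus\hat\Phi\) has \(H_\psi \ne H\), then observation~\ref{obs:hom_family_relative} applies and \(|\Phi\setminus\hat\Phi|\) is divisible by \(|H|\). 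Otherwise there is a \(\psi \in \Phi\setminus\hat\Phi\) with \(H_\psi = H\); then \(H\) centralizes \(\psi(\ker\deg) = \langle\psi(\hat a^p)\rangle\), the abelian image \(\psi(\hat M)\) automatically commutes with \(\psi(\hat a^p)\), and since the first coordinates of \(\psi(\hat M)\) exhaust \(M\) we have \(\psi(\hat M)\cdot H = M\ltimes H\), whence \(\psi(\hat a^p) \in Z(M\ltimes H)\), as required.

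The main obstacle---really the only delicate point---is the choice of the kernel \(\langle\hat a^p\rangle\): it must contain \(\hat a^{p^2}\) so that membership in \(\Phi\setminus\hat\Phi\) is detected by the tail (ensuring the second condition of theorem~\ref{thm:hom_family}), and it must be exactly \(\langle\hat a^p\rangle\) so that the degenerate case \(H_\psi = H\) yields centrality of \(\psi(\hat a^p)\) itself rather than of some deeper power; both requirements single out this quadruple. Everything else mirrors the bookkeeping of the preceding lemma.
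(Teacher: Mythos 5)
Your proof is correct and follows exactly the paper's route: the paper's own proof of this lemma is just ``the same as the proof of the previous lemma,'' i.e.\ apply observation~\ref{obs:hom_family_relative} to \(\Phi \setminus \hat\Phi\) with respect to the quadruple \((\hat M \epi \hat M/\langle \hat a^p\rangle; H \subset M \ltimes H)\) and extract \(\psi(\hat a^p) \in Z(M \ltimes H)\) in the degenerate case \(H_\psi = H\). Your verification that \(\Phi\setminus\hat\Phi\) is closed under conjugation and under taking homomorphisms with the same tail (via \(\hat a^{p^2} \in \ker\deg\)) is a detail the paper leaves implicit, and you have it right.
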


\begin{proof}
    The same as the proof of the previous lemma.
\end{proof}

Either the number of sections itself is divisible by \(|H|\), or \(|\hat \Phi| = |\Phi| - |\Phi \setminus \hat \Phi|\) is divisible by \(|H|\), or there exists an element \(x = \phi(a^p) \in Z(M \ltimes H)\) of order \(p\) and an element \(y = \psi(\hat a^p) \in Z(M \ltimes H)\) of order at least \(p^2\) (we assumed that \(\psi \notin \hat \Phi\), so \(\psi(\hat a^{p^2}) \neq 1\)) such that \(x\) and \(y\) have the same image under the projection \(M \ltimes H \to M\). It follows that \(xy^{-1}\) is an element of order at least \(p^2\) lying in \(Z_H = H \cap Z(M \ltimes H)\), and then Lemma~\ref{lem:large_center} can be applied. The number of crossed homomorphisms is divisible by \(|H|\) in each of these cases, which completes the proof.

\begin{remark}\label{rem:induction_step}
    We only needed the induction hypothesis in the form ``for any proper quotient group \(M_0\) of \(M\) and any subgroup \(H^*\) of \(H\) such that \(|H^*|\) divides \(|M_0|\), the number of crossed homomorphisms \(M_0 \to H\) is divisible by \(|H|\)".
    
    Our induction step for \(M = M_0 \times \langle a \rangle_{p^2}\) also works for \(M = M_0 \times \langle a \rangle_{p^s}\), \(s > 2\) if \(H\) does not contain elements of order \(p^2\) that centralize \(M \ltimes H\) (one should replace \(a^p\) in the proof with \(a^{p^{s - 1}}\), then the three lemmata above remain the same, and instead of using Lemma~\ref{lem:large_center} we note that the existence of an element of order at least \(p^2\) lying in \(Z_H\) leads to a contradiction).
\end{remark}

This way we can obtain the following theorem.

\begin{theorem}
    Let \(H\) be a finite \(p\)-group such that every non-abelian subgroup of \(H\) has an elementary abelian center. Then, for each finite abelian \(p\)-group \(M\) acting on \(H\), the number of crossed homomorphisms \(M \to H\) is divisible by \(\gcd(|H|, |M|)\).
\end{theorem}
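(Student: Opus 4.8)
The plan is to deduce the theorem from the machinery already developed for theorem~\ref{thm:main}. First, by corollary~\ref{corol:crossed} it suffices to prove the \emph{restricted} version: for every finite $p$-group $H$ all of whose non-abelian subgroups have elementary abelian center and every action of a finite abelian $p$-group $M$ with $|H|$ dividing $\ord M$, the number of crossed homomorphisms $M \to H$ is divisible by $|H|$. This reduction is legitimate because the class $\mathcal{C}$ of groups satisfying the hypothesis is closed under passing to subgroups (a non-abelian subgroup of a subgroup $H^* \le H$ is itself a non-abelian subgroup of $H$, hence has elementary abelian center), so once the restricted statement is established for all members of $\mathcal{C}$ it applies in particular to every subgroup $H^*$ of $H$, and corollary~\ref{corol:crossed} upgrades $|H|$ to $\gcd(H, \ord M)$.

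I would then prove the restricted statement for all $H \in \mathcal{C}$ simultaneously by induction on $\ord M$, following the proof of theorem~\ref{thm:main} almost verbatim but dropping the restriction on the shape of $M$. If $M$ is cyclic the count equals $|H|$ by example~\ref{exmpl:finite_cyclic}. Otherwise I split off a cyclic direct factor and write $M = M_0 \times \langle a\rangle_{p^s}$ with $M_0$ nontrivial (so $\ord M_0 < \ord M$), distinguishing two cases according to whether a factor of order $p$ can be peeled off. If some elementary divisor of $M$ equals $p$, I take $s = 1$: since $H$ is a nontrivial normal subgroup of the finite $p$-group $M \ltimes H$, the subgroup $Z_H = H \cap Z(M \ltimes H)$ is nontrivial, so $p^s = p$ divides $|Z_H|$, and lemma~\ref{lem:large_center} together with the induction hypothesis for $M_0$ finishes the step.

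The remaining case is that every elementary divisor of $M$ is at least $p^2$, so that $s \ge 2$, and this is where the hypothesis on $H$ enters. If $H$ is non-abelian, then $H$ is a non-abelian subgroup of itself, so $Z(H)$ is elementary abelian by assumption; since $Z_H \subseteq Z(H)$, the group $Z_H$ contains no element of order $p^2$. This is exactly the condition under which remark~\ref{rem:induction_step} guarantees that the induction step for $M = M_0 \times \langle a\rangle_{p^s}$ goes through for arbitrary $s \ge 2$ (replacing $a^p$ by $a^{p^{s-1}}$ throughout the three-lemma argument, with the reductions landing in $\phi$-cores $H_\phi \le H$ and in proper quotients of $M$, all covered by the induction hypothesis). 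If instead $H$ is abelian, the desired divisibility is precisely the theorem of Asai and Takegahara~\cite{ASAI_TAKEGAHARA_1999}, which I would invoke directly; there is no circularity, since this is an independent result applied unconditionally rather than derived from the induction hypothesis.

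The main obstacle is genuinely the case $s \ge 3$. The induction step of theorem~\ref{thm:main} works unconditionally when $s = 2$, because the obstructing element produced there lies in $Z_H$ and has order at least $p^2$, which already forces $p^2 \mid |Z_H|$, so lemma~\ref{lem:large_center} applies; for $s \ge 3$ one instead needs $p^s \mid |Z_H|$, which an element of order $p^2$ does not provide, so such elements must be ruled out entirely. The hypothesis that every non-abelian subgroup of $H$ has elementary abelian center is tailored to do exactly this for non-abelian $H$, forcing $Z_H$ to have exponent dividing $p$. The abelian case — where $Z_H$ may have exponent larger than $p$ — is the one that lies outside the reach of this argument and must be imported from \cite{ASAI_TAKEGAHARA_1999}. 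A secondary point requiring care is the bookkeeping: because the reductions move to subgroups $H_\phi \in \mathcal{C}$ while keeping $\ord M$ the only quantity that strictly decreases, the induction hypothesis must be stated uniformly over all $H \in \mathcal{C}$ at each value of $\ord M$, which the closure of $\mathcal{C}$ under subgroups makes possible.
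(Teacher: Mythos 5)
Your proposal is correct and follows essentially the same route as the paper: reduce to the restricted version via corollary~\ref{corol:crossed} (using closure of the hypothesis under subgroups), handle abelian \(H\) by citing \cite{ASAI_TAKEGAHARA_1999}, and for non-abelian \(H\) note that \(Z_H \subseteq Z(H)\) is elementary abelian so the induction step of remark~\ref{rem:induction_step} applies for every \(s\). You have merely spelled out in detail what the paper's two-sentence proof leaves implicit.
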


\begin{proof}
    For abelian \(H\) see \cite{ASAI_TAKEGAHARA_1999}. If \(H\) is non-abelian, apply the induction argument as it was said in Remark~\ref{rem:induction_step}.
\end{proof}

\section{The number of homomorphisms}
\begin{theorem}\label{thm:annotated}
    For finite groups \(F\) and \(G\) the number \(|\Hom (F, G)|\) is divisible by \(\gcd(|G|, |F : F'|)\) if \(F/F'\) is a product of a cyclic group and a group with cube-free exponent.
\end{theorem}
\begin{proof}
    Theorem~\ref{thm:main} shows that for each \(p\)-torsion \((F/F')_p\) of \(F/F'\) the number of crossed homomorphisms to any finite \(p\)-group \(H\) is divisible by \(\gcd(|F/F'|, |H|)\) (i.e. the Asai-Yoshida conjecture holds in that case). Corollary~\ref{corol:annotated} then implies that the number of homomorphisms \(F \to G\) is divisible by \(\gcd(|G|, |F : F'|)\).
\end{proof}

\bibliographystyle{alpha}
\bibliography{bibliography}

\end{document}